\documentclass[a4paper,draft,reqno,12pt]{amsart}
\usepackage[english]{babel}
\usepackage{amsmath}
\usepackage{amssymb}
\usepackage{amscd}
\usepackage{amsthm}
\usepackage{euscript}
\usepackage{tikz}
\newtheorem{prop}{Proposition}

\newtheorem{theor}{Theorem}

\newtheorem{cor}{Corollary}
\theoremstyle{definition}
\newtheorem{de}{Definition}
\newtheorem{ex}{Example}
\theoremstyle{remark}
\newtheorem {re}{Remark}

\DeclareMathOperator{\Aut}{Aut}

\def\BG{{\mathbb G}}

\def\BG{{\mathbb G}}

\def\BC{{\mathbb C}}

\def\BZ{{\mathbb Z}}

\def\BN{{\mathbb N}}
\def\BQ{{\mathbb Q}}
\def\BP{{\mathbb P}}

\def\Cl{\mathrm{Cl}}

\sloppy
\textwidth=16.3cm
\oddsidemargin=0cm
\topmargin=0cm
\headheight=0cm
\headsep=1cm
\textheight=23.5cm
\evensidemargin=0cm

\begin{document}

\date{}
\title[Euler-symmetric projective toric varieties and additive actions]{Euler-symmetric projective toric varieties and additive actions}
\author{Anton Shafarevich}
\address{Moscow Center of Fundamental and Applied Mathematics,  Moscow, Russia; \linebreak and \linebreak
National Research University Higher School of Economics, Faculty of Computer Science, Pokrovsky Boulevard 11, Moscow, 109028, Russia}
\email{shafarevich.a@gmail.ru}

\thanks{The author was supported by the grant RSF-19-11-00172.}
\subjclass[2010]{Primary 14M25, 14L30; Secondary 14R20, 13N15, 14M17}
\keywords{Euler-symmetrics varieties, toric varieties, additive actions}

\maketitle

\begin{abstract}
Let  $\BG_a$ be the additive group of the field of complex numbers $\BC$. We say that an irreducible algebraic variety $X$ of dimension $n$ admits an additive action if there is a regular action of the group $\BG_a^n = \BG_a \times \ldots \times \BG_a$ ($n$ times) on  $X$ with an open orbit. In 2017 Baohua Fu and Jun-Muk Hwang introduced a class of Euler-symmetric varieties. They gave a classification of Euler-symmetric varieties and proved that any Euler-symmetric variety admits an additive action. In this paper we show that in the case of projective toric varieites the converse is also true. More precisely, a projective toric variety admitting an additive action is an Euler-symmetric variety with respect to any linearly normal embedding into a projective space. Also we discuss some properties of Euler-symmetric projective toric varieties. \end{abstract}

\section{Introduction}

Let $X \subseteq \mathbb{P}^s$ be a projective variety over the field $\mathbb{C}$. Suppose that $X$ is nondegenerate, that is, $X$ is not contained in any hyperplane in $\mathbb{P}^s$. Denote by $\mathbb{G}_m$ the group $\mathbb{C}^{*}$. The following definition was given in \cite{FH}.

\begin{de}
Let $x\in X$ be a smooth point. A $\mathbb{G}_m$-action on $\mathbb{P}^s$ is said to be of \emph{Euler type} at $x$, if the following conditions hold:

\begin{enumerate}
\item the variety $X$ is invariant with respect to this action;

\item the point $x$ is isolated fixed point in $X$ with respect to this action;

\item the induced action on the tangent space $T_x X$ acts by scalar operators.

\end{enumerate}

 We say that $x$ is an \emph{Euler} point if there is a $\mathbb{G}_m$-action on $\mathbb{P}^s$ of Euler type at $x$. The variety $X$ is called \emph{Euler-symmetric} if there is an open subset $U \subseteq X$ such that every point of $U$ is an Euler point.

\end{de}

In \cite{FH} Baohua Fu and Jun-Muk Hwang gave a description of Euler-symmetric varieties using their fundamental forms at a general point. Also they proved that any Euler-symmetric variety admits an additive action \cite[Theorem 3.7]{FH}.

There are several results on additive actions on complete toric varieties. The first one is the work of Hassett and Tschinkel \cite{HT}. They established a correspondence between additive actions on the projective space $\mathbb{P}^n$ and local commutative associative algebras with unit of dimension $n+1$. One can find in \cite{KL} a more general correspondence. 

Additive actions on projective hypersurfaces are studying in \cite{AP} and \cite{AS}. One can find results on additive actions on flag varieties in \cite{A}, \cite{D} and \cite{F}.  Also there are works on additive actions on singular del Pezzo surfaces \cite{DL}, weighted projective planes \cite{ABZ} and Hirzebruch surfaces \cite{HT}.

We recall that a \emph{toric variety} is a normal variety of dimension $n$ which admits an effective action of torus $T = \mathbb{G}_m^n = \mathbb{G}_m \times \ldots \times \mathbb{G}_m$ ($n$ times) with an open orbit. Due to a combinatorical description (see \cite{FULTON} or \cite{COX}) a lot of properties of toric varieties can be described in a nice and simple way. So it is always natural to consider the toric case while studying some general theory.

Toric varieties admitting additive actions are described in \cite{ARRO}. It was proven in \cite{DZHUNUS1} that a complete toric variety of dimension two which admits an additive action can have either one or two non-isomorphic additive actions.  There is a description of complete toric varieties with a unique additive action \cite{DZHUNUS2}. In \cite{S}  projective toric hypersurfaces with additive actions are classified. 

We study Euler-symmetric projective toric variety. It turns out that a linearly normal projective toric variety is Euler-symmetric if and only if it admits an additive action (Theorem \ref{Maintheorem}). Also we study the set of 
Euler points on a projective toric variety (Proposition \ref{Eulerpoints}) and describe fundamental forms corresponding to linearly normal Euler-symmetric projective toric varieties (Proposition \ref{fundform}). 

The autor is grateful to Ivan Arzhantsev for stating the problem and helpful remarks.

\section{Complete toric varieties admitting additive actions}

Here we recall the description of complete toric varieties admitting additive actions obtained in \cite{ARRO}. 

Let $X$ be a complete toric variety of dimension $n$ with an acting torus $T$. Let $N$ be the lattice of one-parameter subgroups of $T$, $M$ be the dual lattice of characters and $\left<\cdot, \cdot \right>: N \times M \to \mathbb{Z}$ be the natural pairing. We denote by $N_{\BQ}$ and $M_{\BQ}$ the vector spaces $N\otimes_{\mathbb{Z}} \BQ$ and $M\otimes_{\BZ} \BQ$, respectively. 

Let $\Delta$ be the fan of polyhedral cones in $N_{\BQ}$, which corresponds to  $X$; see \cite{COX} or \cite{FULTON} for details. Let $\Delta (1) = \{ \rho_1, \ldots, \rho_r\}$ be the set of one-dimensional cones in $\Delta$ and for a cone $\sigma \in \Delta$ by    $\sigma(1)$ we mean the set of one-dimensional faces of $\sigma$. We denote the primitive lattice generator of a ray $\rho$ by $p_{\rho}$ and by $p_i$ we mean $p_{\rho_i}$. 

\begin{de} A vector $e \in M$ is  called a \emph{Demazure root} of a complete fan $\Delta$ if there is a ray $\rho\in \Delta(1)$ such that $\left<p_{\rho}, e\right> = -1$ and $\left<p_{\rho'}, e\right> \geq 0$ for all $\rho' \in \Delta(1),\ \rho' \neq \rho$.

A set of Demazure roots $e_1, \ldots e_n$ of a complete fan $\Delta$ of dimension $n$ is called a \emph{complete collection} if the rays in $\Delta(1)$ can be numbered in such a way that $\left<p_i, e_j \right> = -\delta_i^j$ for all $1 \leq i, j \leq n$ where $\delta_i^j = 1$ when $i=j$ and $\delta_i^j = 0$ when $i \neq j$.  

\end{de}

The following result makes it possible to understand by the fan $\Delta$ whether $X$ admits an additive action.

\begin{theor}\cite[Corollary 2]{ARRO}\label{arzhromask} A complete toric variety $X$ admits an additive action if and only if there is a complete collection of Demazure roots of the fan $\Delta$. 	

\end{theor}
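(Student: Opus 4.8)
The plan is to prove both implications, the key tool in each direction being Demazure's description of the root subgroups of $\Aut(X)$. Recall that to each Demazure root $e\in M$, with distinguished ray $\rho$, one associates the locally nilpotent derivation $\partial_e$ of $\BC[M]=\BC[T]$ determined by $\partial_e(\chi^m)=\langle p_\rho,m\rangle\,\chi^{m+e}$; local nilpotence follows at once from $\langle p_\rho,e\rangle=-1$. Its exponential $U_e(s):=\exp(s\partial_e)$ is an algebraic one-parameter subgroup $U_e\cong\BG_a$ of $\Aut(X)$ acting on the torus by
\[
U_e(s)^\ast\chi^m=\chi^m\,(1+s\,\chi^e)^{\langle p_\rho,m\rangle}.
\]
First I would record this formula, together with the fact that $T$ and the $U_e$ generate $\Aut^0(X)$, where $T$ is a maximal torus and each root $e$ is the weight of the adjoint $T$-action on the corresponding root space \cite{COX}.

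For the implication ``complete collection $\Rightarrow$ additive action'', suppose $e_1,\dots,e_n$ is a complete collection, the rays numbered so that $\langle p_i,e_j\rangle=-\delta_i^j$. Then $p_1,\dots,p_n$ and $-e_1,\dots,-e_n$ are dual $\BZ$-bases of $N$ and $M$, so the characters $y_j:=\chi^{-e_j}$ are coordinates identifying $T\cong(\BG_m)^n$, and on $T$ one has $\chi^{e_j}=y_j^{-1}$. Substituting into the displayed formula gives $U_{e_j}(s)^\ast y_i=y_i\,(1+sy_j^{-1})^{\delta_j^i}$, that is, $U_{e_j}$ sends $y_j\mapsto y_j+s$ and fixes the remaining coordinates. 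Hence $U_{e_1},\dots,U_{e_n}$ pairwise commute, generate an effective action of $\BG_a^n$ on $X$, and the orbit of the unit of $T$ is $\{(y_1,\dots,y_n):y_j\in\BC\}\cong\BA^n$, which is open and dense. This produces the required additive action.

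For the converse I would work inside $G=\Aut^0(X)$, a connected linear algebraic group with maximal torus $T$ and $\lie(G)=\lie(T)\oplus\bigoplus_{e\in R}\mathfrak g_e$, the sum running over the set $R$ of Demazure roots, each $\mathfrak g_e=\lie(U_e)$ being one-dimensional of $T$-weight $e$. An additive action supplies a commutative unipotent subgroup $U\cong\BG_a^n$ with open orbit, hence an $n$-dimensional abelian subalgebra $\mathfrak u\subseteq\lie(G)$ of nilpotent elements whose evaluation at a general point is an isomorphism onto the tangent space. The plan is to show that $U$ may be conjugated inside $\Aut(X)$ to the subgroup generated by root subgroups $U_{e_1},\dots,U_{e_n}$, and then to read off the combinatorics: the open-orbit and effectiveness conditions force $p_1,\dots,p_n$ to be a lattice basis and the $e_j$ to satisfy $\langle p_i,e_j\rangle=-\delta_i^j$, while commutativity of $U$ is equivalent, via the Chevalley commutator relations, to $e_j+e_k$ (and its higher integer multiples) never being a Demazure root --- a condition one checks is precisely what the duality guarantees.

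The main obstacle is this last direction, and specifically the normalization step: a priori $U$ need not be stable under $T$, so one cannot immediately decompose $\mathfrak u$ into root spaces. I expect the crux to be a structural lemma asserting that a commutative unipotent subgroup of $G$ of maximal dimension acting with an open orbit is, up to conjugacy, generated by root subgroups $U_{e_j}$ attached to rays $p_j$ that form a basis of $N$. Granting such a lemma, translating the open-orbit and commutativity constraints into the statement that $\{e_j\}$ is a complete collection becomes a direct computation with the explicit formula above.
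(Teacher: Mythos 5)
First, a remark on the comparison itself: the paper does not prove Theorem \ref{arzhromask} at all --- it is quoted from \cite[Corollary 2]{ARRO} --- so your proposal has to be measured against the proof given there (whose key ingredient, the normalization theorem \cite[Theorem 3]{ARRO}, this paper also invokes later in the proof of Theorem \ref{Maintheorem}). Your forward implication (complete collection $\Rightarrow$ additive action) is correct and coincides with the standard argument: in the coordinates $y_j=\chi^{-e_j}$ the root subgroups $U_{e_j}$ act by translations, and the orbit of the unit of $T$ is an open $\BA^n$. Two points should be stated more carefully. The derivation $\partial_e$ is \emph{not} locally nilpotent on $\BC[T]$: the $k$-th iterate on $\chi^m$ carries the factor $\langle p_\rho,m\rangle(\langle p_\rho,m\rangle-1)\cdots(\langle p_\rho,m\rangle-k+1)$, which never vanishes when $\langle p_\rho,m\rangle<0$; it is locally nilpotent on the charts $\BC[\sigma^\vee\cap M]$ with $\rho\in\sigma(1)$, and the resulting local $\BG_a$-actions glue to a regular action on $X$ that does not preserve $T$. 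Also, that $p_1,\ldots,p_n$ form a $\BZ$-basis of $N$ deserves a line: the homomorphism $N\to\BZ^n$, $v\mapsto(-\langle v,e_1\rangle,\ldots,-\langle v,e_n\rangle)$, sends $p_1,\ldots,p_n$ to the standard basis, hence is surjective, hence an isomorphism of lattices of equal rank.

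The converse is where the entire difficulty of the theorem sits, and your reduction of it to the expected ``structural lemma'' is a genuine gap, because that lemma is false as stated. Take $X=\BP^2$, so $\Aut^0(X)=\mathrm{PGL}_3$, and let $U=\{\exp(aN+bN^2)\mid a,b\in\BC\}$, where $N$ is a regular nilpotent $3\times 3$ matrix; this is the Hassett--Tschinkel additive action attached to the algebra $\BC[x]/(x^3)$ \cite{HT}. Then $U\cong\BG_a^2$ is commutative, unipotent, of maximal dimension $n=2$, and acts on $\BP^2$ with an open orbit, yet $U$ is not conjugate to any subgroup generated by root subgroups: a two-dimensional commutative unipotent subgroup normalized by a maximal torus has Lie algebra equal to a sum of two root spaces of $\mathfrak{sl}_3$, and every element of such a subalgebra squares to zero, whereas $\lie U$ contains $N$ with $N^2\neq 0$. (Equivalently: by \cite{HT} the two additive actions on $\BP^2$ are non-conjugate, and only the one attached to $\BC[x,y]/(x,y)^2$ is normalized by a maximal torus.) So no argument that conjugates the \emph{given} subgroup $U$ into a $T$-normalized position can succeed. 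What is true --- and what \cite[Theorem 3]{ARRO} proves --- is the weaker, existential statement: if $X$ admits some additive action, then it admits some, in general non-conjugate, additive action normalized by $T$, after which the routine computation with root subgroups applies. Proving this existential statement is the real content of the converse; in \cite{ARRO} it is done by conjugating $U$ into the unipotent radical of a Borel subgroup of $\Aut^0(X)$ containing $T$ (that radical is $T$-normalized, is generated by root subgroups, and inherits the open orbit from $U$) and then showing that the Demazure roots occurring there must contain a complete collection. This step is exactly what your proposal leaves unproved, so as written it establishes only the ``if'' direction.
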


It is easy to see that there is a complete collection of Demazure roots of the fan $\Delta$ if and only if one can order rays of the fan $\Delta$ in such a way that the primitive vectors on the first $n$ rays form a basis of the lattice $N$ and the remaining rays lie in the negative octant with respect to this basis (see Figure \ref{fig:M1}).

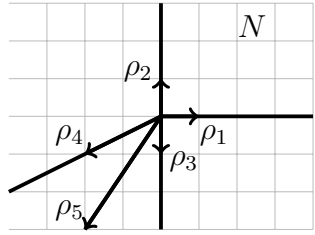
\begin{figure}[hbt!]
\centering
\begin{tikzpicture}

\draw[step = 0.5cm, lightgray] (1, -1) grid (5, 2);
\draw[line width = 0.5mm] (3, 0.5) -- (5, 0.5);
\draw[line width = 0.5mm] (3, 0.5) -- (3, 2);
\draw[line width = 0.5mm] (3, 0.5) -- (3, -1);
\draw[line width = 0.5mm] (3, 0.5) -- (1, -0.5);
\draw[line width = 0.5mm] (3, 0.5) -- (2, -1);
\draw[->, line width = 0.5mm] (3, 0.5) -- (3.5, 0.5);
\draw[->, line width = 0.5mm] (3, 0.5) -- (3, 1);
\draw[->, line width = 0.5mm] (3, 0.5) -- (3, 0);

\draw[->, line width = 0.5mm] (3, 0.5) -- (2, 0);

\draw[->, line width = 0.5mm] (3, 0.5) -- (2, -1	);

\draw (3.7, 0.25) node {$\rho_1$};
\draw (2.7, 1.1) node {$\rho_2$};
\draw (3.3, -0.1) node {$\rho_3$};
\draw (1.8, 0.25) node {$\rho_4$};
\draw (1.8, -0.75) node {$\rho_5$};

\draw(4.2, 1.7) node {$N$};

\end{tikzpicture}
\caption{The fan on this picture corresponds to a complete toric variety that admits an additive action}\label{fig:M1}
\end{figure}

\begin{de}
Let $X\subseteq \mathbb{P}^s$ be a normal nondegenerate projective variety. The embedding of $X$ into $\mathbb{P}^s$ defines a map of spaces $H^0(\mathbb{P}^s, \mathcal{O}(1)) \to H^0(X, \mathcal{O}_X(1))$. We say that $X$ is \emph{linearly normal} projective variety if this map is surjective.
\end{de}

Every nondegenerate linearly normal projective toric variety can be given by a polytope in $M_{\mathbb{Q}}$. We recall that a \emph{lattice polytope} in $M_{\mathbb{Q}}$ is a convex hull of a finite subset in $M$.

Let $m$ be a vertex of a lattice polytope $P$. Denote by $S_{P, m}$ the semigroup in $M$ generated by the set $\left(P\cap M\right) - m$. The semigroup $S_{P, m}$ is called \emph{saturated} if for all $k \in \BN\setminus \{0\}$ and for all $a\in M$ the condition $ka\in S_{P, m}$ implies $a\in S_{P, m}$. A lattice polytope $P$ is \emph{very ample} if for every vertex $m \in P$, the semigroup $S_{P, m}$ is saturated.

Let $P \subseteq M_{\mathbb{Q}}$ be a full dimensional very ample lattice polytope and $P\cap M = \{m_0, m_1, \ldots, m_{s}\}$. Then one can consider the map

$$T \longrightarrow \BP^{s},\ \ \ t \longmapsto [\chi^{m_0(t)}: \ldots : \chi^{m_{s}(t)}],$$
where $\chi^{m_i}$ is the character of $T$ corresponding to $m_i$. Denote by $X_P$ the closure of the image of this map. Then $X_P$ is a linearly normal nondegenerate projective toric variety.

Conversely, let $Y \subseteq \BP^{s}$ be a linearly normal nondegenerate projective toric variety with an acting torus $T$. Then $Y$ coincides with $X_P$ for some very ample polytope $P$ in $M$ up to automorphism of~$\BP^{s}$.

\begin{de} \label{Rectangle} A lattice polytope $P \subseteq M_{\mathbb{Q}}$ is \emph{inscribed in a rectangle} if there is a vertex $v_0 \in P$ such that

\begin{itemize}
\item[(1)] the primitive vectors on the edges of $P$ containing $v_0$ form a basis $e_1, \ldots, e_n$ of the lattice $M$;

\item[(2)] for every inequality $\left<p, x\right>\leq a$ on $P$ that corresponds to a facet of $P$ not passing through $v_0$ we have $\left<p, e_i\right>\geq 0$ for all $i = 1, \ldots, n$.
\end{itemize}

\end{de}

\begin{theor} \cite[Theorem 4]{ARRO}
Let $P\subseteq M_{\mathbb{Q}}$ be a very ample polytope and $X_P$ be the corresponding projective toric variety. Then $X_P$ admits an additive action if and only if the polytope $P$ is inscribed in a rectangle.
\end{theor}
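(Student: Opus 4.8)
The plan is to pass to the normal fan of $P$ and reduce the statement to Theorem~\ref{arzhromask}. Recall that the fan $\Delta$ of $X_P$ is the normal fan of $P$: its rays are in bijection with the facets of $P$, and the maximal cone attached to a vertex $v$ is the cone $\sigma_v$ of linear functionals on $M_{\BQ}$ that attain their maximum over $P$ at $v$, which is generated by the primitive outer normals of the facets through $v$. Thus the facet cut out by an inequality $\langle p, x\rangle \le a$ gives the ray of $\Delta$ with primitive generator $p$. With this dictionary in hand, I would show that the condition ``$P$ is inscribed in a rectangle'' matches, term by term, the condition ``$\Delta$ carries a complete collection of Demazure roots,'' after which Theorem~\ref{arzhromask} gives the equivalence with the existence of an additive action.

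For the forward implication I would take the vertex $v_0$ of Definition~\ref{Rectangle} together with the primitive edge vectors $e_1,\dots,e_n$, which form a basis of $M$; in particular $v_0$ is a simple vertex, lying on exactly $n$ facets $F_1,\dots,F_n$ with primitive outer normals $p_1,\dots,p_n$ generating $\sigma_{v_0}$. At a simple vertex each edge is the intersection of all but one of the incident facets, so after matching indices $\langle p_i,e_j\rangle = 0$ for $i\neq j$, while $\langle p_j,e_j\rangle<0$ because $e_j$ points into $P$ across $F_j$; since $\{e_j\}$ is a lattice basis and each $p_i$ is primitive, this forces $\langle p_i,e_j\rangle=-\delta_i^j$. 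It then remains to check that each $e_j$ is a Demazure root, and this is exactly where condition (2) is used: the ray $\rho_j$ contributes $-1$, the other facets through $v_0$ contribute $0$, and a facet $\langle p,x\rangle\le a$ not through $v_0$ contributes $\langle p,e_j\rangle\ge 0$. Hence $e_1,\dots,e_n$ is a complete collection.

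For the converse I would start from a complete collection produced by Theorem~\ref{arzhromask}; by the remark following it the rays can be ordered so that $p_1,\dots,p_n$ form a basis of $N$, the remaining primitive generators have only nonpositive coordinates in this basis, and $\langle p_i,e_j\rangle=-\delta_i^j$, where $\{e_j\}$ is a basis of $M$ dual up to sign to $\{p_i\}$. The heart of this direction is to prove that $C=\cone(p_1,\dots,p_n)$ is a maximal cone of $\Delta$. Since $\Delta$ is complete, an interior point $w$ of $C$ lies in some cone $\sigma\in\Delta$; comparing the $i$-th coordinate of a nonnegative expression of $w$ in the generators of $\sigma$ shows that each $p_i$ must be among them (only $p_i$ has positive $i$-th coordinate, the other rays having nonpositive coordinates), so $C\subseteq\sigma$; and $\sigma$ can contain no further negative-octant ray $p$, because any functional positive on $p_1,\dots,p_n$ is automatically negative on $p$, contradicting strong convexity of $\sigma$. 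Therefore $\sigma=C$ and $C$ corresponds to a vertex $v_0$ of $P$. Its edges point inward along the directions characterized by $\langle p_i,\cdot\rangle=0$ for $i\neq j$, namely $e_1,\dots,e_n$, giving condition (1); and a facet not through $v_0$ is a remaining ray $p=\sum_k c_k p_k$ with $c_k\le 0$, whence $\langle p,e_j\rangle=-c_j\ge 0$, giving condition (2). So $P$ is inscribed in a rectangle.

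The step I expect to be the main obstacle is precisely the verification that $\cone(p_1,\dots,p_n)$ is a maximal cone, so that it is dual to an honest vertex of $P$: completeness of $\Delta$ is what forces the interior of $C$ to be covered, and strong convexity of the fan's cones is what prevents a single cone from straddling $C$ and the negative octant. The remaining difficulty is purely one of bookkeeping in the polytope--fan duality — keeping the sign conventions for outer normals and inward edge vectors consistent, and invoking primitivity to sharpen the orthogonality relations $\langle p_i,e_j\rangle=c_i\delta_i^j$ to $c_i=-1$.
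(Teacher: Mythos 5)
Your argument is correct, but note that the paper never proves this statement at all: it is quoted verbatim from \cite{ARRO}, and the only trace of an argument in the paper is the remark following Theorem~\ref{arzhromask}, which records exactly the reformulation (first $n$ primitive ray generators form a basis of $N$, remaining rays in the negative octant) that your proof makes rigorous. So your proposal supplies the missing derivation, and it does so along the natural route --- the polytope--fan dictionary plus Theorem~\ref{arzhromask} --- which is in substance the argument of \cite{ARRO} itself; in particular your identification of the real work, namely showing that $\cone(p_1,\dots,p_n)$ is an honest maximal cone of $\Delta$ (via completeness and strong convexity), is exactly right, and both directions check out. One caveat you should fix when writing this up: you use the \emph{outer} normal fan (functionals maximized at a vertex), whereas the fan of $X_P$ as constructed in the paper and in \cite{COX}, \cite{FULTON} is the \emph{inner} normal fan --- with $v_0$ at the origin and $P$ in the positive octant, the fixed point $[1:0:\dots:0]$ corresponds to the cone spanned by the dual basis $e_1^*,\dots,e_n^*$, i.e.\ by inner normals. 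This is harmless here, because the two fans differ by the lattice automorphism $-\mathrm{id}_N$ and the existence of a complete collection of Demazure roots is invariant under any lattice automorphism (if $\{e_j\}$ is a complete collection for $\Delta$, then $\{-e_j\}$ is one for $-\Delta$); but with the paper's conventions the complete collection produced by an inscribed-in-a-rectangle polytope is $\{-e_1,\dots,-e_n\}$, the negatives of the primitive edge vectors, and the signs in your two directions should be adjusted accordingly.
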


\begin{figure}[hbt!]
\centering
\begin{tikzpicture}

\draw[step = 0.5cm, lightgray] (1, -1) grid (5, 2);
\draw[->, thick] (1, 0) -- (5, 0);
\draw[->, thick] (2, -1) -- (2, 2);
\draw[line width = 0.5mm] (2, 0) -- (4, 0) -- (3.5, 1) -- (2.5, 1.5) -- (2, 1.5) -- (2, 0) ;
\draw[->, line width = 0.5mm] (2, 0) -- (2.5, 0);
\draw[->, line width = 0.5mm] (2, 0) -- (2, 0.5);
\draw (1.75, 0.33) node {$e_2$};
\draw (2.35, -0.25) node {$e_1$};
\draw (4.25, 1.75) node {$M$};

\draw[step = 0.5cm, lightgray] (8, -1) grid (12, 2);
\draw[->, thick] (8, 0) -- (12, 0);
\draw[->, thick] (9, -1) -- (9, 2);
\draw[line width = 0.5mm] (9, 0) -- (10, 0) -- (10.5, 0.5) -- (9.5, 1.5) -- (9, 1.5) -- (9, 0) ;
\draw[->, line width = 0.5mm] (9, 0) -- (9.5, 0);
\draw[->, line width = 0.5mm] (9, 0) -- (9, 0.5);
\draw (8.75, 0.33) node {$e_2$};
\draw (9.35, -0.25) node {$e_1$};
\draw (11.25, 1.75) node {$M$};

\end{tikzpicture}
\caption{The polytope on the left is inscribed in a rectangle and the polytope on the right is not. }\label{fig:M4}
\end{figure}
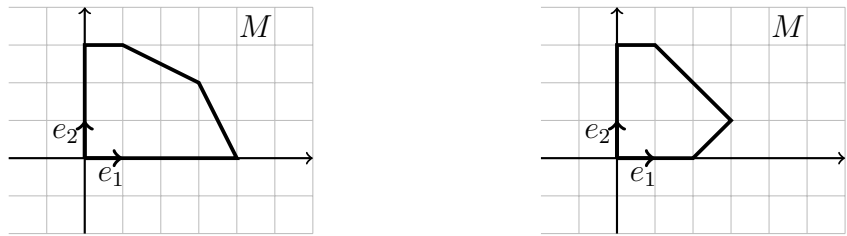

\section{Fundamental forms}

As we mentioned before an Euler-symmetric variety is uniquely determined by its fundamental form. We recall some definitions.

Let $X \subseteq \BP^s$ be a $n$-dimensional nondegenerate irreducible projective subvariety of the projective space $\BP^s$ and  $x\in X$ be a smooth point. Denote by $z_0, \ldots, z_s$ homogeneous coordinates on $\BP^s$ and by $y_i = \frac{z_i}{z_0}$ the respective inhomogeneus coordinates on the affine chart $U_0 = \{z\in \BP^s | z_0 \neq 0\}$. We may assume that $x = [1:0:\ldots :0]$ with respect to these coordinates and the tangent space $T_x X$ is given by equations $y_i = 0$ for 
$i =n+1,\ldots, s$. 

Then the functions $y_1, \ldots, y_n$ are the system of local parameters on $X$. Denote by $L$ the set of linear combinations $\{l = \alpha_0 + \alpha_1y_1 + \ldots + \alpha_s y_s\ |\ \alpha_i \in \BC \}$.

For any function $l\in L$ there is an open neighborhood of $x$ such that $l$ is given by the series 

$$l = \sum_{i=0}^{\infty} h^i_l (y_1, \ldots, y_n),$$
where $h^i_l$ is a homogeneous polynomial of degree $i$. Denote by $\overline{h}(l)$ the first non-zero term. 

\begin{de}The \emph{$k$-th fundamental form} of the variety $X$ at the point $x$ is a vector space 
$$F^k = \left<\overline{h}(l)|l\in L,\ \deg \overline{h} (l) = k \right> \subseteq \mathrm{Sym}^k (T_x X)^*.$$

The \emph{fundamental form} of the variety $X$ at the point $x$ is the vector space

$$F = \bigoplus^{\infty}_{k=0} F^{k} \subseteq \bigoplus_{k=0}^{\infty}\mathrm{Sym}^k (T_x X)^*.$$
\end{de}

\begin{de} We say that the fundamental form is \emph{monomial} if one can choose coordinates $y_1, \ldots, y_n$ such that there is a basis in $F$ consisting of monomials. Each monomial $y_1^{a_1}\ldots y_n^{a_n}$ defines a point $(a_1,\ldots, a_n) \in \BZ_{\geq 0}^n$ so a monomial fundamental form defines a set $D(F) = \{ (a_1, \ldots, a_n)\ |\ y_1^{a_1}\ldots y_n^{a_n} \in F\}$. 
\end{de}

\begin{re} One can find in \cite[Chapter 3]{IL} a more generally accepted definition of fundamental form. It follows from \cite[Lemma 2.5]{FH} that these definitions are equivalent. 

\end{re}

\begin{de}
Let $V$ be a vector space. For any vector $v\in V$ and a non-negative integer number $k$ the contraction homomorphism is a linear map  $\iota_v :\mathrm{Sum}^{k} V^* \to \mathrm{Sym}^{k-1} V^*$ which sends $\varphi \in \mathrm{Sum}^{k} V^*$ to $\iota_v (\varphi) \in \mathrm{Sym}^{k-1} V^*$ defined by

$$\iota_v (\varphi) (v_1, \ldots, v_k) = \varphi(v, v_1, \ldots, v_k),$$
for any $v_1, \ldots, v_k \in V$. By convention, we define $\iota_v (\mathrm{Sym}^0 V^*) = 0$. 

\end{de}

\begin{de} Consider a subspace $ W = \bigoplus_{k \geq 0} W^k \subseteq \bigoplus_{k \geq 0} \mathrm{Sym}^k V^*$. Then the subspace~$W$ is called a \emph{symbol system} if $W^0 = \BC, W^1 = V^*$, $W^k \neq 0$ only for finite number of $k$ and $\iota_v (W) \subseteq W$ for all $v\in V$.   

\end{de} 

It is well know that the fundamental form at a general point is a symbol system (see \cite[Chapter 3]{IL}).  It follows from \cite[Theorem 3.7]{FH} that an Euler-symmetric variety is uniquely defined by its fundamental form at a general point. Moreover, for any symbol system $F$ there is an Euler-symmetric variety $X$ such that a fundamental form of $X$ at a general point is $F$.

\section{Euler-symmetric projective toric varieties}

First of all, note that if a normal projective variety $X$ is an Euler-symmetric variety with respect to some nondegenerate embedding of $X$ into a projective space, then $X$ is Euler-symmetric with respect to any linearly normal embedding of $X$ into a projective space. More precisely, the following statement is true.

\begin{prop}\label{equiv}
Let $X$ be a normal projective variety and $x\in X$ be a smooth point. The following conditions are equivalent. 

\begin{enumerate}

\item The point $x$ is an Euler point with respect to some nondegenerate embedding of $X$ into a projective space.

\item The point $x$ is an Euler point with respect to any nondegenerate linearly normal embedding of $X$ into a projective space.

\end{enumerate}     
\end{prop}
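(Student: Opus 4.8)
The plan is to separate the Euler-type structure into an intrinsic $\mathbb{G}_m$-action on $X$ together with a choice of how to see that action inside projective space, and then to use linearizability of line bundles on a normal variety to recover the second ingredient for an arbitrary linearly normal embedding. The implication $(2)\Rightarrow(1)$ is then essentially formal. First I would note that a nondegenerate linearly normal embedding of $X$ exists at all: choosing a very ample line bundle $L_0$ on the projective variety $X$ and taking its complete linear system gives an embedding $X\hookrightarrow \mathbb{P}(H^0(X,L_0)^*)$ that is nondegenerate, since a basis of $H^0(X,L_0)$ consists of linearly independent sections, and linearly normal, since the linear system is complete. Applying condition $(2)$ to this particular embedding produces a $\mathbb{G}_m$-action of Euler type at $x$, which is exactly what $(1)$ asserts.

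For the converse $(1)\Rightarrow(2)$, suppose a $\mathbb{G}_m$-action $\mu$ on some nondegenerate $X\hookrightarrow\mathbb{P}^s$ is of Euler type at $x$, and set $\lambda:=\mu|_X$, an intrinsic $\mathbb{G}_m$-action on $X$. The crucial observation is that conditions $(2)$ and $(3)$ of the definition of Euler type refer only to $\lambda$: that $x$ is an isolated fixed point is a statement about the action on $X$, and the induced representation on the Zariski tangent space $T_xX$ is the differential $d\lambda_x$, which is intrinsic to $\lambda$ and to the point $x$. Hence these two conditions hold for $\lambda$ once and for all, independently of the embedding. The whole problem therefore reduces to the following: given any nondegenerate linearly normal embedding $X\hookrightarrow\mathbb{P}^{s'}$, realize $\lambda$ as the restriction of a genuine linear $\mathbb{G}_m$-action on $\mathbb{P}^{s'}$ that leaves $X$ invariant. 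Conditions $(2)$ and $(3)$ for the new action are then inherited from $\lambda$ for free, while condition $(1)$ is the invariance we are building in.

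To carry out this realization I would invoke linearization of line bundles. Write $L'=\mathcal{O}_X(1)$ for the very ample bundle of the target embedding; because the embedding is linearly normal, the space of hyperplane forms is the full $H^0(X,L')$ and $\mathbb{P}^{s'}=\mathbb{P}(H^0(X,L')^*)$. Since $X$ is normal and $\Pic(\mathbb{G}_m)=0$, every line bundle on $X$ admits a $\mathbb{G}_m$-linearization: the standard argument identifies the two pullbacks of $L'$ along the action and the projection on $\mathbb{G}_m\times X$ using $\Pic(\mathbb{G}_m\times X)=\Pic(X)$ (see the linearization results of Knop--Kraft--Vust / Mumford). A linearization of $L'$ with respect to $\lambda$ induces an honest linear representation of $\mathbb{G}_m$ on $H^0(X,L')$, hence a linear $\mathbb{G}_m$-action $\mu'$ on $\mathbb{P}^{s'}$, and by construction the embedding is $\mathbb{G}_m$-equivariant, so $\mu'$ preserves $X$ and restricts to $\lambda$ there. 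This establishes $(1)$--$(3)$ for $\mu'$ and finishes the argument.

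The main obstacle, and the reason condition $(2)$ is stated only for linearly normal embeddings, is precisely this last step: for a non-complete linear system $V'\subsetneq H^0(X,L')$ the linearized $\mathbb{G}_m$-action on $H^0(X,L')$ need not preserve $V'$, so $\lambda$ may fail to extend to the corresponding $\mathbb{P}(V'^*)$. Linear normality removes the difficulty because $V'=H^0(X,L')$ is the whole representation and is therefore automatically stable. The points I would be most careful about are checking that the linearization theorem genuinely applies in our setting (connected group $\mathbb{G}_m$, normal $X$), and that the resulting action on $H^0(X,L')$ is a true linear representation rather than a projective one, so that $\mu'$ is a genuine $\mathbb{G}_m$-action on $\mathbb{P}^{s'}$ and not merely a $\mathrm{PGL}$-valued cocycle.
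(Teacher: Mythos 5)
Your proposal is correct and follows essentially the same route as the paper: the nontrivial implication is proved by linearizing $\mathcal{O}_X(1)$ with respect to the restricted $\mathbb{G}_m$-action (possible since $X$ is normal and $\mathbb{G}_m$ has trivial Picard group, as in Knop--Kraft--Luna--Vust), obtaining a linear action on the complete space of sections and hence an extension of the action to the ambient projective space, while the reverse implication is the same formal existence argument. Your explicit remarks that conditions (2) and (3) of the Euler-type definition are intrinsic to the action on $X$, and that linear normality is what makes the section space automatically stable, are refinements of points the paper leaves implicit, but the underlying argument is identical.
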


\begin{proof}
$(1) \Rightarrow (2).$ Here we use reasoning similar to the proof of \cite[Proposition 3.2.6 ]{BRION}.

Let $X\subseteq \mathbb{P}^s$ be a nondegenerate projective variety and $x\in X$ be a smooth Euler point. Then there is a $\mathbb{G}_m$-action of Euler type at $x$. Consider a nondegenerate linearly normal embedding $\rho: X \hookrightarrow \mathbb{P}^k$. Denote by $L$ the restriction on $X$ of the line bundle $\mathcal{O}(1)$ on~$\mathbb{P}^k$. Since $\mathbb{G}_m$ is a factorial variety the line bundle $L$ can be linearizied with respect to the action of $\mathbb{G}_m$ on $X$; \cite[Proposition 2.4]{KKLV}. The linarization defines a rational linear $\mathbb{G}_m$-action on $H^0(X, L) \simeq H^0(\mathbb{P}^k, \mathcal{O}(1))$. It defines extended $\mathbb{G}_m$-action on~$\mathbb{P}^k$.  Therefore $x$ is an Euler point with respect to embedding $\rho: X \hookrightarrow \mathbb{P}^k$.

Any normal projective variety admits a nondegenerate linearly normal embedding into a projective space. So the implication  $(2) \Rightarrow (1)$ is trivial. 

\end{proof}

\begin{cor}\label{CorES}
Let $X$ be a normal projectvie variety. Then $X$ is Euler-symmetric with respect to some nondegenerate embedding into a projective space if and only if $X$ is Euler-symmetric with respect to any nondegenerate linearly normal embedding of $X$ into a projective space.
\end{cor}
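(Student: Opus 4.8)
The plan is to deduce the corollary directly from the pointwise equivalence established in Proposition \ref{equiv}. Recall that $X$ is Euler-symmetric with respect to a given embedding precisely when the set of Euler points for that embedding contains a nonempty open subset of $X$. The whole strategy is to observe that Proposition \ref{equiv} identifies the loci of Euler points across different embeddings, so that Euler-symmetry becomes an intrinsic property of $X$ that does not depend on the particular linearly normal embedding chosen.

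For the implication that Euler-symmetry with respect to some nondegenerate embedding forces Euler-symmetry with respect to every nondegenerate linearly normal embedding, I would argue as follows. Suppose $X \subseteq \mathbb{P}^s$ is Euler-symmetric, so there is a nonempty open subset $U \subseteq X$ all of whose points are Euler points with respect to this embedding. Fix an arbitrary nondegenerate linearly normal embedding $\rho \colon X \hookrightarrow \mathbb{P}^k$. For each $x \in U$, condition (1) of Proposition \ref{equiv} holds, with the given embedding $X \subseteq \mathbb{P}^s$ playing the role of ``some nondegenerate embedding''; hence condition (2) holds as well, so $x$ is an Euler point with respect to $\rho$. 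Thus the same open set $U$ witnesses Euler-symmetry of $X$ with respect to $\rho$, and since $\rho$ was arbitrary, this yields Euler-symmetry with respect to every nondegenerate linearly normal embedding.

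The reverse implication is immediate: every normal projective variety admits a nondegenerate linearly normal embedding, a fact already invoked in the proof of Proposition \ref{equiv}, and such an embedding is in particular a nondegenerate embedding. Hence, if $X$ is Euler-symmetric with respect to every nondegenerate linearly normal embedding, then it is Euler-symmetric with respect to at least one such embedding, and therefore with respect to some nondegenerate embedding.

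I do not expect a genuine obstacle, since Proposition \ref{equiv} carries all the analytic content. The only point requiring a little care is that the passage from ``some embedding'' to ``any linearly normal embedding'' must be carried out pointwise over a single open set $U$, so that one and the same open subset serves as the witness for every target embedding simultaneously; this is exactly what the pointwise formulation of Proposition \ref{equiv} provides, and once it is recorded the corollary follows formally.
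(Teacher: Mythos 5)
Your proposal is correct and follows exactly the route the paper intends: the corollary is stated as an immediate consequence of Proposition~\ref{equiv}, obtained by applying the pointwise equivalence to each point of the open set $U$ witnessing Euler-symmetry, together with the fact that a nondegenerate linearly normal embedding always exists. Your careful remark that one and the same open set $U$ serves for every target embedding is precisely the (implicit) content of the paper's deduction, so nothing is missing.
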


Now we consider the toric case.

\begin{prop}\label{Eulerpoint}
Let $X$ be a projective toric variety with an acting torus $T$ and $x \in X$ be a smooth $T$-fixed point. Then $x$ is an Euler point with respect to any linearly normal nondegenerate embedding of $X$ into a projective space.  
\end{prop}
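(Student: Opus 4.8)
The plan is to use Proposition~\ref{equiv} to reduce the assertion, which concerns \emph{every} linearly normal embedding, to the construction of a single Euler-type action for \emph{one} convenient embedding. Since $X$ is a projective toric variety, it is isomorphic to $X_P$ for some full-dimensional very ample lattice polytope $P$, and this gives a nondegenerate linearly normal embedding $X \hookrightarrow \BP^s$ that is $T$-equivariant, the torus $T$ acting linearly on $\BP^s$ through the characters $\chi^{m_0}, \ldots, \chi^{m_s}$. In particular every one-parameter subgroup of $T$ induces a $\mathbb{G}_m$-action on $\BP^s$. Thus it suffices to exhibit, for this embedding, a one-parameter subgroup $\lambda \subseteq T$ that is of Euler type at $x$; Proposition~\ref{equiv} then upgrades this to every linearly normal nondegenerate embedding.

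Next I would set up local coordinates at $x$. A smooth $T$-fixed point corresponds to a smooth maximal cone $\sigma \in \Delta$ of dimension $n$, whose primitive ray generators $p_1, \ldots, p_n$ form a $\BZ$-basis of $N$. Letting $u_1, \ldots, u_n \in M$ be the dual basis, so that $\langle p_i, u_j\rangle = \delta_i^j$, the semigroup $\sigma^\vee \cap M$ is freely generated by $u_1, \ldots, u_n$. Hence the affine chart $U_\sigma = \Spec \BC[\sigma^\vee \cap M]$ is isomorphic to $\BA^n$ with coordinates $y_i = \chi^{u_i}$, the point $x$ is the origin $y_1 = \cdots = y_n = 0$, and $y_1, \ldots, y_n$ is a regular system of parameters there.

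The key choice is the cocharacter $v = p_1 + \cdots + p_n \in N$; let $\lambda$ be the corresponding one-parameter subgroup of $T$. Since $\langle v, u_i\rangle = \sum_j \langle p_j, u_i\rangle = 1$ for every $i$, the subgroup $\lambda(\tau)$ sends a point with coordinates $(y_1, \ldots, y_n)$ to $(\tau y_1, \ldots, \tau y_n)$, i.e.\ it scales all local coordinates by one and the same factor. From this I would read off the three conditions of Euler type: the variety $X$ is $\lambda$-invariant because $\lambda \subseteq T$; the fixed locus of $\lambda$ inside the neighborhood $U_\sigma$ is cut out by $y_1 = \cdots = y_n = 0$ and therefore equals $\{x\}$, so $x$ is an isolated fixed point of $X$; and on $T_x X$ the differential of $\lambda(\tau)$ is scalar multiplication by $\tau$, since all $y_i$ carry the same weight. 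Hence $\lambda$ is of Euler type at $x$.

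The construction is short, and I expect no genuine obstacle once the reduction is in place: the crucial point is the single weight computation $\langle v, u_i\rangle = 1$, which simultaneously forces the tangent action to be scalar and the fixed point to be isolated. The only facts requiring care are standard — the identification of a smooth $T$-fixed point with a smooth cone whose primitive generators form a lattice basis, and the description of the chart $U_\sigma$ — and the appeal to Proposition~\ref{equiv} is precisely what spares me from having to analyze an arbitrary linearly normal embedding directly, which would otherwise be the awkward step.
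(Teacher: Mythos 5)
Your proposal is correct and is essentially the paper's own argument: both choose the diagonal one-parameter subgroup corresponding to $p_1+\cdots+p_n$ (the paper's subtorus $T_1=\{(t,\ldots,t)\}$ in the coordinates dual to the edge basis at the fixed point), which scales all local parameters at $x$ by the same factor, yielding invariance of $X$, isolatedness of the fixed point, and scalar action on $T_xX$. The only organizational difference is that you verify Euler type for one convenient polytope embedding and then upgrade via Proposition~\ref{equiv}, whereas the paper runs the same computation uniformly for an arbitrary very ample polytope $P$, so it never needs to invoke Proposition~\ref{equiv}.
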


\begin{proof}

Let  $M$ be the lattice of characters of $T$. Any linearly normal nondegenerate embedding of $X$ into a projective space correponds to some very ample lattice polytope $P \subseteq M_{\mathbb{Q}}$.  Denote by $v_0$ the vertex of $P$ corresponding to $x$. We can assume that $v_0$ is the zero point of $M$. 

Since $x$ is a smooth point, the primitive vectors on the edges of $P$ containing $v_0$ form a basis $e_1, \ldots, e_n$. All vertices of $P$ have non-negative coordinates with respect to this basis.

Denote by $\{m_0, m_1, \ldots, m_s\}$ the lattice points in $P$. We can assume that the first $n+1$ points have the following coordinates in the basis $e_1, \ldots, e_n$:

$$m_0 = v_0 = (0, \ldots, 0), m_1 = (1, 0, \ldots, 0), \ m_2 = (0, 1, \ldots, 0), \ldots, m_n = (0, 0, \ldots, 1).$$
We denote the coordinates of the other points as:
$$m_{n+i} = (a_{i, 1}, \ldots, a_{i,n}),\ i = 1,\ldots, s-n, $$
where $a_{i,j}$ are non-negative integer numbers. 

Let $t_1, \ldots, t_n$ be the coordinates on $T$ corresponding to the basis $e_1, \ldots, e_n$. For any character $m\in M$ and for any $t\in T$ we denote by $t^m$ the value $\chi^m(t)$.  

Consider the map 
$$\varphi_D: T \to \mathbb{P}^{s},\ (t_1, \ldots, t_n) \to [t^{m_0}:\ldots: t^{m_s}] =$$ $$=  [1: t_1: \ldots: t_n: t_1^{a_{1,1}}\cdots t_n^{a_{1,n}}:\ldots :t_1^{a_{s-n,1}}\cdots t_n^{a_{s-n,n}}].$$

The variety $X$ is the closure of the image of the map $\varphi_D$. Note that the point $x$ has coordinates $[1: 0:\ldots: 0]$. 

It is easy to see that the action of $T$ on $X$ can be extended to an action of $T$ on $\BP^s$. Consider the subtorus 
$$T_1 = \{(t, \ldots, t) | t\in \BC^{\times} \}\subseteq T.$$ 

Denote by $z_0, \ldots, z_s$ the homogeneous coordinats on $\BP^s$. Consider the affine chart $U_0 = \{[z_0 : \ldots :z_s]\ |\ z_0 \neq 0\} \subseteq \BP^s$. Let $y_i = \frac{z_i}{z_0}$ be the corresponding inhomogeneous coordinates on $U_0$. Then the variety $X \cap U_0$ satisfies equations $y_{n+i} = y_1^{a_{i,1}}\ldots y_n^{a_{i,n}}.$  We see that $x = (0, \ldots, 0)$ is an isolated fixed point in $X$ with respect to the action of $T_1$. 

The tangent space at a point $x$ on $X$ is given by equations $y_{n+1} = \ldots = y_s = 0$. But $T_1$ acts by multiplication by $t$ on the variables $y_1, \ldots, y_n$. So $T_1$ acts by scalar operators on $T_x X$. So $x$ is a Euler point with respect to the action of $T_1$. 

\end{proof}

\begin{re}\label{remark}
It follows from the proof of Proposition \ref{Eulerpoint} that the fundamental form $F$ of $X$ at the point $x$ is monomial and the set $D(F)$ coincides with the set $\{m_0, m_1, \ldots, m_s\}$. 
\end{re}

\begin{prop}\label{Eulerpoints}
Let $X$ be a projective toric variety with an acting torus $T$ and $x$ be a smooth point in $X$. The following conditions are equivalent. 

\begin{enumerate}

\item The point $x$ is an Euler point with respect to some nondegenerate embedding of $X$ into a projective space.

\item The point $x$ is an Euler point with respect to any nondegenerate embedding of $X$ into a projective space.

\item There is an automorphism $\varphi$ of $X$ such that $\varphi(x)$ is a $T$-fixed point.

\end{enumerate}     
\end{prop}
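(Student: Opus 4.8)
The plan is to prove the three equivalences by establishing the cycle $(3) \Rightarrow (2) \Rightarrow (1) \Rightarrow (3)$, leaning on the two previous propositions. The implications $(2) \Rightarrow (1)$ is immediate since ``any'' is a stronger condition than ``some''. For $(3) \Rightarrow (2)$, I would argue as follows. Suppose there is an automorphism $\varphi$ of $X$ with $\varphi(x)$ a $T$-fixed point. By Proposition \ref{Eulerpoint}, the point $\varphi(x)$ is an Euler point with respect to any linearly normal nondegenerate embedding, hence in particular is an Euler point with respect to some embedding. Since $\varphi$ is an automorphism of the abstract variety $X$, it carries the Euler structure at $\varphi(x)$ back to an Euler structure at $x$: concretely, if $X \hookrightarrow \BP^s$ is a linearly normal embedding and $\psi$ is a $\BG_m$-action of Euler type at $\varphi(x)$, then $\varphi$ induces a linear automorphism of $H^0(X, \mathcal{O}_X(1))$, and conjugating $\psi$ by this automorphism produces a $\BG_m$-action of Euler type at $x$. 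Thus $x$ is an Euler point with respect to some embedding, and then Proposition \ref{equiv} upgrades this to any nondegenerate linearly normal embedding. A small extra argument via linear normalization (again using Proposition \ref{equiv}) is needed to pass from linearly normal embeddings to arbitrary nondegenerate ones.

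The real content lies in the implication $(1) \Rightarrow (3)$, which is the main obstacle. Here I start from a $\BG_m$-action of Euler type at $x$ for some embedding $X \hookrightarrow \BP^s$. The goal is to produce an automorphism of $X$ moving $x$ to a $T$-fixed point. The strategy is to exploit the geometry of the $\BG_m$-action: because $x$ is an isolated fixed point whose induced action on $T_x X$ is by scalars (say with positive weight after possibly inverting the parameter), the Bia\l ynicki-Birula decomposition shows that the orbit-closure structure forces $x$ to be an attractive (or repulsive) fixed point, so that the limit $\lim_{\lambda \to 0} \lambda \cdot y$ equals $x$ for all $y$ in a Zariski-open neighborhood. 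I would then analyze how this one-parameter subgroup sits inside the automorphism group $\Aut(X)$. The key point is that the connected automorphism group $\Aut^0(X)$ of a projective toric variety is a linear algebraic group containing the torus $T$ as a maximal torus of its reductive part, and the $\BG_m$ of Euler type sits inside $\Aut^0(X)$. Since all maximal tori of $\Aut^0(X)$ are conjugate, the given $\BG_m$ lies in some maximal torus, which is conjugate to $T$ by an element $\varphi \in \Aut^0(X)$; this $\varphi$ carries the Euler-type $\BG_m$ into $T$.

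From here I would argue that $\varphi(x)$ is a $T$-fixed point. Since $x$ is an isolated fixed point of the Euler-type $\BG_m$-action, its image $\varphi(x)$ is an isolated fixed point of the conjugated action, which now lies inside $T$. I would then show that being an isolated fixed point for a generic (or suitably chosen) one-parameter subgroup of $T$ forces $\varphi(x)$ to be fixed by all of $T$: the fixed-point set of $T$ in a projective toric variety is exactly the finite set of torus-fixed points corresponding to the maximal cones of the fan, and an isolated $\BG_m$-fixed point of a one-parameter subgroup that is itself a maximal torus element must coincide with one of these. The technically delicate step is the claim that the Euler-type $\BG_m$ is conjugate into $T$ within $\Aut^0(X)$, for which I must know that $\Aut^0(X)$ is linear algebraic and that $T$ is a maximal torus therein; this is where I would invoke the structure theory of automorphism groups of complete toric varieties (Demazure, Cox). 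I expect the conjugacy-of-maximal-tori step to be the principal obstacle, since one must rule out the possibility that the Euler $\BG_m$ fails to lie in a torus conjugate to $T$, and one must ensure the conjugating automorphism preserves the isolated-fixed-point property so that $\varphi(x)$ lands on an honest $T$-fixed vertex.
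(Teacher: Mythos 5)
Your proposal follows essentially the same route as the paper: for $(1)\Rightarrow(3)$ both arguments use Cox's theorem that $G=\Aut^0(X)$ is a linear algebraic group, place the Euler $\BG_m$ inside a maximal torus of $G$, and invoke conjugacy of maximal tori; for $(3)\Rightarrow(2)$ both reduce to Proposition \ref{Eulerpoint}; and the remaining bookkeeping is Proposition \ref{equiv} in both cases. (Your detour through the Bia\l ynicki-Birula decomposition and through ``the reductive part'' of $\Aut^0(X)$ is unnecessary: $T$ is itself a maximal torus of $\Aut^0(X)$.)

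Two of your justifications need repair, however. First, in $(1)\Rightarrow(3)$ you cannot argue via a ``generic (or suitably chosen)'' one-parameter subgroup of $T$: the one-parameter subgroup you must work with is the conjugate of the given Euler $\BG_m$, and you have no freedom to choose or perturb it. The correct and simpler argument, which is the paper's, is pure commutativity plus connectedness: if $T'$ is a maximal torus of $G$ containing the Euler subgroup $T_1$, then every point of the orbit $T'x$ is $T_1$-fixed (since $T'$ is abelian), and as $T'x$ is connected and $x$ is an \emph{isolated} $T_1$-fixed point, $T'x=\{x\}$, i.e. $x$ is already $T'$-fixed; now conjugating $T'$ to $T$ finishes. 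Your order of operations (conjugate first, then prove $T$-fixedness of $\varphi(x)$) also works, but only with this same commutativity argument, not with genericity. Second, in $(3)\Rightarrow(2)$, an automorphism $\varphi$ of $X$ induces a linear automorphism of $H^0(X,\mathcal{O}_X(1))$ only when $\varphi^*\mathcal{O}_X(1)\cong\mathcal{O}_X(1)$, which is not automatic for an arbitrary automorphism; either precompose the embedding with $\varphi$ to obtain \emph{some} nondegenerate embedding for which $x$ is Euler and then apply Proposition \ref{equiv}, or, as the paper does, observe that $X$ is a toric variety with respect to the conjugated torus, which fixes $x$, and apply Proposition \ref{Eulerpoint} to $x$ directly.
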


\begin{proof}

We start with implication $(1) \Rightarrow (3)$. Suppose $x\in X$ is an Euler point with respect to an action of one-dimensional torus $T_1$ with respect to some nondegenerate embedding into a projective space. Denote by $G$ the connected component of unit in the automorphism group $\Aut (X)$. Then $G$ is an affine algebraic group (see \cite[Theorem 4.2]{COX2}). Hence there is a maximal subtorus $T'$ in  $G$ such that $T_1$ is a subgroup of $T'$. All points in the orbit $T'x$ are $T_1$-fixed. Since $x$ is isolated $T_1$-fixed point then $x$ is a $T'$-fixed point.

All maximal subtors in $G$ are conjugated, so there is an automorphism $\varphi \in G$ such that $\varphi T' \varphi^{-1} = T$. Then $\varphi(x)$ is a $T$-fixed point.

Now we prove $(3) \Rightarrow (2)$. Suppose that there is an automorphism $\varphi$ of $X$ such that $\varphi(x)$ is a $T$-fixed point. Then $x$ is a fixed point with respect to the torus $\varphi T \varphi^{-1}$. The variety $X$ is a toric variety with respect to the action of the torus $\varphi T \varphi^{-1}$. By Proposition \ref{Eulerpoint} the point $x$ is an Euler point with respect to any linearly normal nondegenerate embedding.

The equivalence of (1) and (2) follows from Proposition \ref{equiv}.
\end{proof}

\begin{theor}\label{Maintheorem}

Let $X$ be a projective toric variety with an acting torus $T$. The following conditions are equivalent:

\begin{enumerate}
\item $X$ is Euler-symmetric with respect to some nondegenerate embedding into a projective space;
\item $X$ is Euler-symmetric with respect to any nondegenerate linealy normal embedding into a projective space;
\item $X$ admits an addititve action.
\end{enumerate}

\end{theor}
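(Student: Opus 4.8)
The plan is to prove the theorem as a cycle of implications, leveraging the preparatory results already established in the paper. Since the equivalence $(1) \Leftrightarrow (2)$ for \emph{Euler-symmetry} follows immediately from Corollary \ref{CorES} (applied pointwise via Proposition \ref{equiv}), the real content is to link the Euler-symmetric condition with the existence of an additive action. So I would concentrate on proving $(2) \Rightarrow (3)$ and $(3) \Rightarrow (1)$, which closes the loop.

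For the implication $(3) \Rightarrow (2)$, suppose $X$ admits an additive action. I would fix any linearly normal nondegenerate embedding $X = X_P \subseteq \BP^s$ corresponding to a very ample polytope $P \subseteq M_{\BQ}$. By the theorem of Arzhantsev--Romaskevich quoted in the excerpt, the polytope $P$ is inscribed in a rectangle, so there is a distinguished vertex $v_0$ whose incident edge-vectors form a lattice basis and with respect to which every other vertex has non-negative coordinates. This vertex corresponds to a smooth $T$-fixed point $x_0 \in X$, and Proposition \ref{Eulerpoint} shows $x_0$ is an Euler point. The task is then to exhibit an \emph{open} set of Euler points rather than a single one. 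Here I would invoke Proposition \ref{Eulerpoints}: the Euler points are exactly the $\Aut(X)$-translates of $T$-fixed points. Since an additive action supplies a $\BG_a^n$-action with an open orbit $U$, and the identity coset of that orbit can be arranged to contain $x_0$ (or a $T$-fixed point), the translates of $x_0$ under the open orbit sweep out a dense open subset of $X$ consisting of Euler points. Thus $X$ is Euler-symmetric.

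For $(1) \Rightarrow (3)$, assume $X$ is Euler-symmetric with respect to some embedding. By Proposition \ref{Eulerpoints} every Euler point is an $\Aut(X)$-translate of a $T$-fixed point; in particular, since the Euler points form a dense set, there exists at least one smooth $T$-fixed point $x_0$, and by the same proposition (specifically Proposition \ref{Eulerpoint} read backwards) the local structure of $X$ at such a point is monomial with exponent set equal to the lattice points of the defining polytope $P$. I would then argue combinatorially: the existence of a smooth $T$-fixed vertex $v_0$ together with the Euler-type $\BG_m$-action forces the incident edges at $v_0$ to be a lattice basis and all remaining vertices to lie in the non-negative orthant, i.e.\ $P$ is inscribed in a rectangle. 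Invoking the Arzhantsev--Romaskevich theorem once more then yields an additive action on $X$.

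The main obstacle I anticipate is the passage from a \emph{single} Euler point to an \emph{open set} of them in the $(3) \Rightarrow (2)$ direction — that is, making precise that the open orbit of the additive action, acting by automorphisms, carries the $T$-fixed Euler point $x_0$ to a dense family of Euler points. This requires knowing that the $\BG_a^n$-action moves $x_0$ into the open orbit (so that its orbit is dense) and that the Euler-point property is preserved under the automorphisms furnished by the additive action; both follow from Proposition \ref{Eulerpoints}(3), which characterizes Euler points intrinsically as $\Aut(X)$-orbits of $T$-fixed points, but one must verify that a $T$-fixed point indeed lies in (the closure of) the open orbit and that translating it lands inside the open orbit itself. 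The other steps are essentially bookkeeping with the already-proven combinatorial criterion and the local monomial description from Remark \ref{remark}.
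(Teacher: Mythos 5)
Your proposal has two genuine gaps, one in each of the directions that carry the real content. The serious one is in $(1) \Rightarrow (3)$. You argue: a smooth $T$-fixed Euler point gives a vertex $v_0$ of $P$ whose edge vectors form a lattice basis, all other vertices then have non-negative coordinates, ``i.e.\ $P$ is inscribed in a rectangle.'' That last inference is false. Being inscribed in a rectangle (Definition \ref{Rectangle}) requires, beyond the basis condition at $v_0$, the facet-normal condition (2): every facet inequality $\left<p,x\right>\leq a$ not through $v_0$ must satisfy $\left<p,e_i\right>\geq 0$. Containment of $P$ in the non-negative orthant is strictly weaker: the right-hand polytope of Figure \ref{fig:M4} has a smooth vertex at the origin with all lattice points in the non-negative orthant, yet it is not inscribed in a rectangle (its facet with outward normal $p = e_1^* - e_2^*$ violates condition (2)). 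So a smooth $T$-fixed Euler point alone cannot yield the additive action; what is actually needed is that the Euler points form an \emph{open} set, which forces the open torus orbit and the fixed point to lie in the same $\Aut(X)$-orbit. The paper exploits exactly this: it either cites \cite[Theorem 3.7]{FH} directly, or (Proposition \ref{Simpleproof}) uses Bazhov's Theorem \ref{Bazh} to deduce $\Gamma(\sigma_0) = \Gamma(\sigma)$ for the cones of the open orbit and the fixed point, and from the resulting relations $[D_i] = \sum_{j>n} a_{i,j}[D_j]$ in $\Cl(X)$, via the exact sequence $0 \to M \to \BZ^r \to \Cl(X) \to 0$, extracts characters $m_i$ placing the rays $\rho_{n+1},\ldots,\rho_r$ in the negative octant --- i.e.\ a complete collection of Demazure roots. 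None of this global input appears in your sketch, and without it the implication does not go through.

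In $(3) \Rightarrow (2)$ you correctly identify the obstacle --- getting a $T$-fixed point \emph{inside} the open $\BG_a^n$-orbit $U$ so that transitivity of $\BG_a^n$ on $U$ plus Proposition \ref{Eulerpoints}(3) makes every point of $U$ an Euler point --- but you leave it unresolved (``can be arranged,'' ``one must verify''). For an arbitrary additive action $U$ need not be $T$-invariant and there is no a priori reason it contains a fixed point. The paper closes this with \cite[Theorem 3]{ARRO}: the additive action may be chosen \emph{normalized} by $T$, so $U$ is a $T$-invariant affine toric variety isomorphic to $\BA^n$; since $\BA^n$ admits no nonconstant invertible regular functions, $U$ contains a $T$-fixed point, and Proposition \ref{Eulerpoints} finishes the argument. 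Your fallback of deducing from \cite[Theorem 4]{ARRO} that $P$ is inscribed in a rectangle and taking the fixed point $x_0$ at the vertex $v_0$ could also be made to work, but then you must still check $x_0 \in U$ (e.g.\ that $x_0$ avoids the divisors $D_{n+1},\ldots,D_r$ whose complement is the open orbit of the normalized action), which again needs the normalization you never invoke. The equivalence $(1) \Leftrightarrow (2)$ via Corollary \ref{CorES} matches the paper and is fine.
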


\begin{proof}

$(1) \Rightarrow (3)$. Suppose that $X$ is Euler-symmetric with respect to some nondegenerate embedding into a projective space. Then $X$ admits an additive action by \cite[Theorem 3.7]{FH} (see also Proposition \ref{Simpleproof} below). 

$(3) \Rightarrow (2)$ Now suppose that $X$ admits an additive action. Then by \cite[Theorem 3]{ARRO} $X$ admits an additive action normalized by $T$. Let $U$ be the open orbit with respect to this additive action. The orbit $U$ is isomorphic to the affine space and $T$-invariant. Therefore the variety $U$ is an affine toric variety with respect to the action of $T$. An affine toric variety has a $T$-fixed point if and only if it has no regular invertible functions except constants. So  there is a $T$-fixed smooth point $p\in U$. By Proposition \ref{Eulerpoints} all points in $U$ are Euler points with respect to any nondegenerate linealy normal embedding into a projective spaces. 

The equivalence of (1) and (2) follows from Corollary \ref{CorES}. 
\end{proof}

It is proved in \cite{FH} that any Euler-symmetric projective vaiety admits an additive action. However, in the case of toric varieties one can prove it using the combinatorial description of toric varieties. For this purpose we need a description of the orbits of the automorphism group of complete toric varieties obtained by Ivan Bazhon \cite{Bazhov}.

For a ray $\rho_i$ we denote by $D_i$ the $T$-invariant divisor corresponding to  $\rho_i$ and by $[D_i]$ we mean the class of this divisor in the divisor class group.  For a cone $\tau \in \Delta$ we define a monoid

$$\Gamma (\tau) = \sum_{\rho_i \in \Delta(1) \setminus \tau (1)} \mathbb{Z}_{\geq 0}[D_{i}] .$$
Denote by $\Upsilon (\Delta)$ the set of monoids $\{\Gamma(\tau): \tau \in \Delta \}.$ By $O_{\tau}$ we mean the torus orbit on $X$ corresponding to $\tau \in \Delta$.

\begin{theor}\cite[Theorem 3.7]{Bazhov}\label{Bazh} Let $X$ be a complete toric variety. Torus orbits $O_{\sigma}$ and $O_{\sigma'}$ on $X$ lie in the same $\mathrm{Aut}(X)$-orbit if and only if there exists an automorphism $\phi:~\mathrm{Cl}(X)\to~\mathrm{Cl}(X)$ with the following properties:
\begin{itemize}
\item $\phi(\Gamma(\sigma)) = \Gamma (\sigma')$,
\item $\phi(\Upsilon(\Delta)) = \Upsilon(\Delta),$
\item there exists a permutation $f$ of elements $\{1, \ldots, r\}$ such that $\phi([D_i]) = [D_{f(i)}].$
\end{itemize}

\end{theor}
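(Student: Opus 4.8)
The plan is to realise $X$ through Cox's homogeneous coordinate ring and to read off both the monoids $\Gamma(\tau)$ and the action of $\Aut(X)$ from that description. Write $R = \BC[x_1,\ldots,x_r]$ for the Cox ring, graded by $\Cl(X)$ via $\deg x_i = [D_i]$, so that $X$ is the categorical quotient $(\BA^r\setminus Z)/G$ with $G = \homo(\Cl(X),\BG_m)$ and $Z$ the irrelevant locus. In these coordinates the orbit $O_\tau$ is the image of the locally closed set where $x_i = 0$ exactly for $\rho_i\in\tau(1)$. The first step is to describe $\Gamma(\tau)$ intrinsically: a monomial $\prod_i x_i^{a_i}$, of degree $\sum_i a_i[D_i]$, is nonzero on $O_\tau$ precisely when $a_i = 0$ for all $\rho_i\in\tau(1)$, so $\Gamma(\tau) = \sum_{\rho_i\notin\tau(1)}\BZ_{\geq 0}[D_i]$ is exactly the semigroup of classes $c\in\Cl(X)$ carrying a global section that is nonzero at a general point of $O_\tau$. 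This characterisation is manifestly preserved under pullback of sections by automorphisms.

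Second, I would recall the structure of $\Aut(X)$ following Demazure and Cox: every automorphism lifts to a graded automorphism of $R$ normalising $G$, and $\Aut(X)$ is generated by the torus $T$, the root subgroups $U_e$ attached to the Demazure roots $e$, and the finite group of symmetries permuting the variables $x_i$ compatibly with the grading. The connected component $\Aut^0(X)$, generated by $T$ and the $U_e$, acts trivially on the discrete group $\Cl(X)$; hence the automorphism of $\Cl(X)$ induced by any $\psi\in\Aut(X)$ factors through the component group and permutes the generators $[D_i]$. This is the source of the permutation $f$ and of the identity $\phi([D_i]) = [D_{f(i)}]$.

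For the forward implication, let $\psi\in\Aut(X)$ satisfy $\psi(O_\sigma) = O_{\sigma'}$ and let $\phi$ be the automorphism it induces on $\Cl(X)$. Write $\psi = \psi_0 s$ with $\psi_0\in\Aut^0(X)$ and $s$ a symmetry automorphism. Since $\psi_0$ acts trivially on $\Cl(X)$, if $\psi_0(O_\sigma) = O_{\sigma''}$ then pulling back non-vanishing sections along $\psi_0$ (which preserves degree) gives $\Gamma(\sigma) = \Gamma(\sigma'')$. The symmetry $s$ permutes the variables by the permutation $f$ underlying $\phi$, hence carries cones to cones, sends $O_{\sigma''}$ to $O_{\sigma'}$ with $\sigma'(1) = f(\sigma''(1))$, and satisfies $\Gamma(s\cdot\tau) = \phi(\Gamma(\tau))$ for every cone $\tau$. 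Combining, $\phi(\Gamma(\sigma)) = \Gamma(\sigma')$, the map $\phi$ permutes the whole family $\Upsilon(\Delta)$ because $s$ does, and $\phi([D_i]) = [D_{f(i)}]$, which is exactly the asserted data.

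For the reverse implication I start from $\phi$ with the three stated properties. The permutation $f$ with $\phi([D_i]) = [D_{f(i)}]$ makes the substitution $x_i\mapsto x_{f(i)}$ a grading-compatible automorphism of $R$ — grading-compatibility being precisely the existence of $\phi$ as a group automorphism — so it normalises $G$ and descends to a symmetry automorphism $s$ of $X$. The hypothesis $\phi(\Upsilon(\Delta)) = \Upsilon(\Delta)$ guarantees that $f(\sigma(1))$ is again the ray set of a cone $\sigma^*$, whence $s(O_\sigma) = O_{\sigma^*}$ and $\Gamma(\sigma^*) = \phi(\Gamma(\sigma)) = \Gamma(\sigma')$. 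This reduces everything to the key claim: two cones $\tau,\tau'$ with $\Gamma(\tau) = \Gamma(\tau')$ have $O_\tau$ and $O_{\tau'}$ in one $\Aut^0(X)$-orbit. I expect this claim to be the main obstacle. The mechanism is the root subgroup $U_e$ of a Demazure root $e$ with distinguished ray $\rho_k$: it acts on the Cox coordinates by $x_k\mapsto x_k + t\prod_{j\neq k} x_j^{\langle p_j,e\rangle}$, fixing the remaining variables, and for $t\neq 0$ it moves an orbit $O_\tau$ with $\rho_k\in\tau(1)$ and $\{\rho_j:\langle p_j,e\rangle>0\}\cap\tau(1) = \emptyset$ to the adjacent orbit obtained by deleting $\rho_k$; this deletion leaves $\Gamma$ unchanged, since the relation $[D_k] = \sum_{j\neq k}\langle p_j,e\rangle[D_j]$ shows $[D_k]\in\Gamma(\tau)$. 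The technical heart is the converse bookkeeping: from $\Gamma(\tau) = \Gamma(\tau')$ one must produce the requisite Demazure roots — turning a membership $[D_k]\in\Gamma(\tau)$ into an actual root with the correct distinguished ray and nonnegativity — and then run an induction on the number of rays in which $\tau$ and $\tau'$ differ, deleting them one at a time by such root-subgroup moves until $O_\tau$ is carried onto $O_{\tau'}$.
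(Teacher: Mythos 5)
The paper does not prove this statement at all --- it is quoted verbatim from \cite{Bazhov} as a known result --- so there is no in-paper argument to compare with; judged on its own merits, your attempt contains genuine gaps, both in the reverse implication. First, your claim that grading-compatibility alone makes the substitution $x_i \mapsto x_{f(i)}$ ``descend to a symmetry automorphism $s$ of $X$'' is not correct: a graded permutation of the Cox variables is an automorphism of $\BA^r$ normalizing $G$, but it descends to $X$ only if it preserves the complement of the irrelevant locus, equivalently only if the induced lattice automorphism of $N$ (which does exist and sends $p_i \mapsto p_{f^{-1}(i)}$, by lifting $\phi$ through $0 \to M \to \BZ^r \to \Cl(X) \to 0$) maps the fan $\Delta$ to itself. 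You try to extract this from $\phi(\Upsilon(\Delta)) = \Upsilon(\Delta)$, asserting it ``guarantees that $f(\sigma(1))$ is again the ray set of a cone.'' But that hypothesis only says $\phi$ permutes the \emph{set} of monoids $\{\Gamma(\tau)\}$, and $\Gamma(\tau)$ does not remember $\tau(1)$: distinct cones routinely have equal monoids (that is the whole point of the theorem), so knowing $\phi(\Gamma(\sigma)) = \Gamma(\tau')$ for some cone $\tau'$ says nothing about whether the rays $f(\sigma(1))$ span a cone of $\Delta$. Note also that $f$ is not determined by $\phi$ when some classes $[D_i]$ coincide, so at minimum you would have to argue that a \emph{suitable} choice of $f$ yields a fan symmetry; this step is missing.

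Second, the statement you yourself identify as ``the main obstacle'' --- that $\Gamma(\tau) = \Gamma(\tau')$ forces $O_\tau$ and $O_{\tau'}$ into one $\Aut^0(X)$-orbit --- is deferred to an unexecuted induction, yet it is the actual mathematical content of Bazhov's theorem, not bookkeeping. Turning a membership $[D_k] \in \sum_{j \neq k} \BZ_{\geq 0}[D_j]$ into a Demazure root is indeed routine (the same lifting through the divisor-class exact sequence appears in the proof of Proposition~\ref{Simpleproof} in this paper), but you still must show that the cone obtained by deleting $\rho_k$ from $\tau(1)$ belongs to $\Delta$, that a $U_e$-translate of a point of $O_\tau$ really lies in that smaller orbit, and --- hardest --- that any two cones with equal monoids are joined by a \emph{chain} of such elementary deletions; nothing in your sketch excludes a pair $\tau, \tau'$ with $\Gamma(\tau) = \Gamma(\tau')$ but no common deletion path, and this connectivity argument is where the work lies. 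A further unacknowledged gap: your structural inputs (every automorphism of $X$ lifts to a graded automorphism of the Cox ring; $\Aut(X)$ is generated by $\Aut^0(X)$ and variable permutations) are theorems of Demazure and Cox in the smooth, respectively simplicial, case, while the theorem is asserted for arbitrary complete toric varieties --- extending these inputs is part of what \cite{Bazhov} must handle. By contrast, your forward implication is essentially sound, modulo a small fix: ``lying in the same $\Aut(X)$-orbit'' gives $\psi(x) \in O_{\sigma'}$ for a point $x \in O_\sigma$, not $\psi(O_\sigma) = O_{\sigma'}$; your pullback argument should therefore be run pointwise, using the fact that a monomial section of degree $c \in \Gamma(\tau)$ vanishes nowhere on $O_\tau$.
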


\begin{prop}\label{Simpleproof}
Let $X \subseteq \mathbb{P}^s$ be a nondegenerate Euler-symmetric projective toric variety. Then $X$ admits an additive action.
\end{prop}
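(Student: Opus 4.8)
The plan is to give a purely combinatorial argument: starting from the Euler-symmetry of $X$ I will locate a smooth $T$-fixed point whose associated maximal cone $\sigma$ is unimodular and positioned so that all remaining rays of the fan lie in the negative octant; this is exactly a complete collection of Demazure roots, and the existence of an additive action then follows from Theorem \ref{arzhromask}. To begin, since $X$ is Euler-symmetric a general point of $X$ is an Euler point, so the open $T$-orbit $O_{\{0\}}$ (corresponding to the trivial cone $\{0\}\in\Delta$) meets the open set of Euler points and I may pick an Euler point $x\in O_{\{0\}}$. By Proposition \ref{Eulerpoints} there is $\varphi\in\Aut(X)$ such that $p:=\varphi(x)$ is a $T$-fixed point. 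As $x$ lies in the open orbit it is smooth, hence so is $p$; therefore $p$ corresponds to a full-dimensional smooth cone $\sigma\in\Delta$, and the orbit $O_\sigma=\{p\}$ lies in the same $\Aut(X)$-orbit as $O_{\{0\}}$.

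Next I would invoke Bazhov's description (Theorem \ref{Bazh}). Since $O_{\{0\}}$ and $O_\sigma$ lie in the same $\Aut(X)$-orbit, there is an automorphism $\phi:\Cl(X)\to\Cl(X)$ with $\phi(\Gamma(\{0\}))=\Gamma(\sigma)$ and $\phi([D_i])=[D_{f(i)}]$ for some permutation $f$ of $\{1,\dots,r\}$. The monoid $\Gamma(\{0\})=\sum_{i=1}^{r}\BZ_{\geq 0}[D_i]$ is generated by all the classes $[D_i]$, and $\phi$ merely permutes these generators, so $\phi(\Gamma(\{0\}))=\Gamma(\{0\})$. Hence the first property of Bazhov's theorem collapses to the key equality $\Gamma(\{0\})=\Gamma(\sigma)$.

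It then remains to read off the negative-octant condition from $\Gamma(\{0\})=\Gamma(\sigma)$. I number the rays so that $\sigma(1)=\{\rho_1,\dots,\rho_n\}$ with primitive generators $p_1,\dots,p_n$; by smoothness of $\sigma$ these form a $\BZ$-basis of $N$, and I write the remaining generators as $p_{n+l}=\sum_{k=1}^{n}c_{l,k}\,p_k$ for $l=1,\dots,r-n$. Using the dual basis $e_1,\dots,e_n$ of $M$, the relations $\div(\chi^{e_k})=D_k+\sum_l c_{l,k}D_{n+l}$ give $[D_k]=-\sum_l c_{l,k}[D_{n+l}]$ in $\Cl(X)$; since $\sigma$ is smooth, this identifies $\Cl(X)\cong\BZ^{r-n}$ with the classes $[D_{n+l}]$ as standard basis. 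Under this identification $\Gamma(\sigma)$ is the standard orthant $\BZ_{\geq 0}^{r-n}$, so the equality $\Gamma(\{0\})=\Gamma(\sigma)$ forces each $[D_k]=(-c_{l,k})_l$ to lie in $\BZ_{\geq 0}^{r-n}$, that is $c_{l,k}\leq 0$ for all $k,l$. Thus every remaining ray lies in the negative octant with respect to $p_1,\dots,p_n$, which is precisely a complete collection of Demazure roots, and Theorem \ref{arzhromask} yields the desired additive action.

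I expect the main obstacle to be this final step, where the monoid equality in $\Cl(X)$ must be shown to genuinely force the sign condition on the coefficients $c_{l,k}$. The feature that makes the argument work is that the smoothness of $\sigma$ renders $\Cl(X)$ free with the classes $[D_{n+l}]$ forming an honest basis: there are then no hidden relations among them, so the membership $[D_k]\in\Gamma(\sigma)$ can be compared coefficientwise and directly delivers $c_{l,k}\leq 0$.
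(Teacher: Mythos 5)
Your proof is correct and takes essentially the same route as the paper's: use Proposition \ref{Eulerpoints} to place a smooth $T$-fixed point in the same $\Aut(X)$-orbit as the open $T$-orbit, apply Bazhov's theorem and the permutation property to force $\Gamma(\{0\})=\Gamma(\sigma)$, and translate this monoid equality into the negative-octant condition on the remaining rays, concluding via Theorem \ref{arzhromask}. The only difference is cosmetic: in the last step you identify $\Cl(X)\cong\BZ^{r-n}$ explicitly with the classes $[D_{n+l}]$ as basis and compare coefficients, whereas the paper lifts the relations $[D_i]=\sum_{j>n}a_{i,j}[D_j]$ through the exact sequence $0\to M\to\BZ^r\to\Cl(X)\to 0$ to produce the dual basis vectors $m_i$; these are the same computation.
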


\begin{proof}

Let $X$ be an Euler-symmetric projective toric variety with an acting torus $T$. Then there is an open subset $U$ consisting of Euler points in $X$. Denote by $O$  the open $T$-orbit in~$X$. It is clear that $O$ is a subset of $U$. By Proposition \ref {Eulerpoints} there is a $T$-fixed Euler point $x$ which belongs to the same $\mathrm{Aut}(X)$-orbit with points from $O$. 

Denote by $\Delta$ the fan corresponding  to $X$. Let $\sigma$ be the cone in $\Delta$ corresponding to the $T$-orbit $x$ and $\sigma_0$ be the cone corresponding to the $T$-orbit $O$. Note that $\sigma$ is a maximal cone in $\Delta$ and $\sigma_0$ is a vertex. 

Let $\Delta(1) = \{\rho_1, \ldots, \rho_r\}$ be the set of rays in $\Delta$ and $p_1, \ldots, p_r$ be the primitive lattice generators on these rays. We suppose that the first $n$ rays belong to $\sigma$. Since $x$ is a smooth point the vectors $p_1, \ldots, p_n$  form a basis of the lattice $N$. Then the monid $\Gamma(\sigma_0)$  is equal to $\sum^r_{i = 1}  \mathbb{Z}_{\geq 0}[D_i]$ and the monoid $\Gamma(\sigma)$ is equal to $\sum^r_{i=n+1} \mathbb{Z}_{\geq 0}[D_i]$.

By \cite[Theorem 3.7]{Bazhov} there is an automorphism  $\phi $ of the group $\Cl (X)$ such that $ \phi(\Gamma (\sigma_0)) = \Gamma (\sigma)$. The automorphism $\phi$  permutes the elements of the set $\{[D_1], \ldots, [D_r]\}$.  Since the monoid~$\Gamma( \sigma_0)$  is generated by the set $\{[D_1], \ldots, [D_r]\}$, we have $\phi(\Gamma (\sigma_0)) = \Gamma(\sigma_0)$.  It implies that $\Gamma (\sigma_0) = \Gamma (\sigma)$. It follows that for any $i = 1,\ldots, n$ there are non-negative integeres $a_{i,j}$ such that

$$[D_{i}] = \sum^r_{j=n+1} a_{i,j} [D_{j}].$$

There is an exact sequence 

$$0 \longrightarrow M \longrightarrow \BZ^r \longrightarrow \Cl (X) \longrightarrow 0,$$
where the second arrow is given by the map

$$m \to (\left<p_1, m\right>, \ldots, \left<p_r, m\right>),\ m\in M, $$
and the third arrow is given by the map

$$(b_1, \ldots, b_r) \to b_1[D_1] + \ldots + b_r[D_r],$$
where $b_j \in \mathbb{Z}$; see \cite[Chapter 4]{COX} for details. 

Since $[D_{i}] - \sum^r_{j=n+1} a_{i,j} [D_{j}] = 0$ for $i=1,\ldots n$ there are vectors $m_i \in M$ such that $\left<p_j, m_i\right> = \delta_i^j$ for $1 \leq i, j \leq n$ and $\left<p_j, m_i\right> = -a_{i,j}$, for $1 \leq i \leq n < j \leq r$. 

Then $m_1, \ldots, m_n$ is a dual basis to $p_1, \ldots, p_n$ and the vectors $p_j$ with $j > n$ have coordinates $-a_{i,j}$. So rays $\rho_{n+1}, \dots, \rho_r$ lie in the negative octant with respect to the basis $p_1, \ldots, p_n$. By Theorem \ref{arzhromask} the variety $X$ admits an additive action.

\end{proof}

It is easy to describe symbol systems which correspond to toric Euler-symmetric varieties.

\begin{prop} \label{fundform}

Let $X \subseteq \mathbb{P}^s$ be a linearly normal nondegenerate projective Euler-symmetric variety of dimension $n$ and $F$ be its fundamental form at a general point. Then $X$ is a toric variety if and only if $F$ is monomial and the set $D(F) \subseteq \mathbb{Z}_{\geq 0}^n$ coincides with the set of lattice points inside  some very ample inscribed in a rectangle lattice polytope. 
\end{prop}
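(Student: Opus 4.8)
The plan is to prove both implications by translating the fundamental form into the combinatorial data of the associated very ample polytope, and then invoking the uniqueness of an Euler-symmetric variety with a prescribed fundamental form \cite[Theorem 3.7]{FH}.

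For the forward implication, assume $X$ is toric. Being a linearly normal nondegenerate Euler-symmetric projective toric variety, $X$ admits an additive action by Theorem \ref{Maintheorem}. Since $X$ is linearly normal and nondegenerate it coincides with $X_P$ for a very ample polytope $P \subseteq M_{\BQ}$, and the presence of an additive action forces $P$ to be inscribed in a rectangle by \cite[Theorem 4]{ARRO}. It then remains to compute the fundamental form at a general point. A general point lies in the open $T$-orbit $O$; by Proposition \ref{Eulerpoints} it lies in the same $\Aut(X)$-orbit as a smooth $T$-fixed point $x_0$, and since an automorphism induces an isomorphism of fundamental forms, the form at a general point is isomorphic to the form at $x_0$. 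Normalizing the vertex $v_0$ corresponding to $x_0$ to be the origin, Remark \ref{remark} identifies this form as monomial with $D(F)$ equal to the set of lattice points of $P$. This is exactly the claimed description, with $P$ very ample and inscribed in a rectangle.

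For the reverse implication, assume $F$ is monomial and $D(F)$ is the set of lattice points of a very ample polytope $P$ inscribed in a rectangle. I would form the toric variety $X_P$ with its linearly normal embedding. Since $P$ is inscribed in a rectangle, \cite[Theorem 4]{ARRO} shows $X_P$ admits an additive action, and Theorem \ref{Maintheorem} then gives that $X_P$ is Euler-symmetric. Applying Remark \ref{remark} together with Proposition \ref{Eulerpoints} to $X_P$, its fundamental form at a general point is monomial with $D(F_{X_P})$ equal to the lattice points of $P$, so $D(F_{X_P}) = D(F)$. A monomial symbol system is determined by its defining set of exponents, hence $X$ and $X_P$ carry isomorphic fundamental forms at a general point. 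Since an Euler-symmetric variety is uniquely determined by its fundamental form at a general point \cite[Theorem 3.7]{FH}, we conclude $X \cong X_P$, and therefore $X$ is toric.

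The step I expect to be the main obstacle is the careful bookkeeping in passing between the fundamental form at a general point and the one at the $T$-fixed point: one must verify that the identification of $D(F)$ with the lattice points of $P$ is coordinate-independent in the precise sense needed, so that the two monomial symbol systems being compared genuinely coincide rather than merely resemble one another. This is where Proposition \ref{Eulerpoints} (the $\Aut(X)$-orbit statement) and the monomial normalization of Remark \ref{remark} carry the essential weight; once they are in place, the uniqueness theorem of Fu--Hwang closes the argument in both directions.
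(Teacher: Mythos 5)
Your skeleton (forward: additive action plus \cite[Theorem 4]{ARRO}, reduction to a $T$-fixed point, Remark \ref{remark}; reverse: build $X_P$, compute its form, invoke Fu--Hwang uniqueness) is the same as the paper's, but there is a genuine gap at exactly the step you flag as the crux, and as written it does not go through. You assert that since a general point and a smooth $T$-fixed point $x_0$ lie in the same $\Aut(X)$-orbit (Proposition \ref{Eulerpoints}), ``an automorphism induces an isomorphism of fundamental forms.'' That is false for an abstract automorphism of $X$: the fundamental form is an invariant of the embedded variety $X \subseteq \BP^s$, not of the abstract variety $X$, so only automorphisms induced by projective-linear transformations of $\BP^s$ are guaranteed to carry the form at a point $u$ to the form at $\varphi(u)$ (compare $\BP^1\times\BP^1$ embedded by $\CO(1,2)$, where the factor-swapping automorphism does not preserve the second fundamental form). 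To repair your argument you must show the automorphism extends to $\BP^s$: the $\varphi$ produced in the proof of Proposition \ref{Eulerpoints} lies in the connected component $\Aut^0(X)$; since $\Pic(X)$ of a complete toric variety is a discrete finitely generated group, $\Aut^0(X)$ acts trivially on it, so $\varphi^*\CO_X(1)\simeq\CO_X(1)$; and linear normality then lets $\varphi$ act on $H^0(X,\CO_X(1))$ and hence extend to a linear automorphism of $\BP^s$ --- essentially the linearization argument already used in Proposition \ref{equiv}. None of this is in your text, and it is precisely the missing content.

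The paper sidesteps this issue by a different mechanism: the $T$-fixed point $x$ corresponding to the distinguished vertex $v_0$ lies in the open $\BG_a^n$-orbit of the additive action, and since $\BG_a^n$ is a factorial variety the $\BG_a^n$-action linearizes and extends to $\BP^s$; the translations moving $x$ to general points are then honest linear automorphisms of the ambient space, so the form at a general point is isomorphic to the monomial form at $x$ computed in Remark \ref{remark}. The same repair is needed a second time in your reverse direction, where you again pass from the form of $X_P$ at a $T$-fixed point to its form at a general point. With that one step fixed (either by the paper's $\BG_a^n$-argument or by the $\Aut^0$-linearization argument above), the rest of your proposal --- the unimodular change-of-vertex bookkeeping, the observation that a monomial symbol system is determined by its exponent set, and the appeal to \cite[Theorem 3.7]{FH} --- is correct and coincides with the paper's proof.
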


\begin{proof}

Let $X \subseteq \mathbb{P}^s$ be a linearly normal nondegenerate Euler-symmetric projective toric variety with an acting torus $T$. Then $X$ admits an additive action.  The embedding of $X$ into the projective space $\mathbb{P}^s$ is given by some very ample lattice polytope $P \subseteq M_{\mathbb{Q}}$.  By \cite[Theorem 4]{ARRO} $P$ is inscribed in a rectangle.

Denote by $v_0$ the vertex of $P$ from Definition \ref{Rectangle} and by $x\in X$ the smooth $T$-fixed point corresponding to $v_0$. We can assume that $v_0$ is the origin of $M$.  The point $x$ belongs to the open $\mathbb{G}_a^n$-orbit in $X$. Since $\mathbb{G}_a^n$ is a factorial variety the action of $\mathbb{G}_a^n$ can be extended on~$\mathbb{P}^s$. So at a general point in $X$ the fundamental form is isomorphic to $F$. By Remark \ref{remark} the fundamental form $F$ at the point $x$ is monomial and the set $D(F)$ coincides with the set of lattice points inside $P$.

Conversely,  let $X \subseteq \mathbb{P}^s$ be a linearly normal nondegenrate projective Euler-symmetric variety and suppose that the fundamental form $F$ at a general point of $X$ is monomial and the set $D(F) \subseteq \mathbb{Z}^n$ coincides with the set of lattice points inside some very ample inscribed in a rectangle lattice polytope $P$. Consider the toric variety $X_P \subseteq \mathbb{P}^s$ corresponding to $P$. Then fundamental form at a general point of $X_P$ is $F$. Since Euler-symmetric variety is unquely determined by its fundamental form at a general point, the varieties $X$ and $X_{P}$ are isomorphic. So $X$ is a toric variety.
\end{proof}

At the end we consider two examples. 

\begin{ex}

As follows from Proposition \ref{Eulerpoints},  each Euler point on projective toric varieties $X$ belongs to $\mathrm{Aut}(X)$-orbit of a $T$-fixed point. At the same time two Euler points can lie in different $\mathrm{Aut}(X)$-orbits. 

Consider the Hirzebruch surface $\mathcal{H}_s$ with $s\geq 1$. It is a toric variety with the fan $\Delta_{\mathcal{H}_s}$ is shown in Figure \ref{fig:M2}.

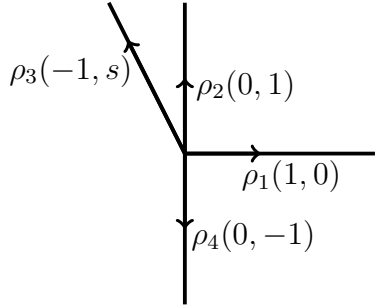
\begin{figure}[hbt!]
\centering
\begin{tikzpicture}

\draw[line width = 0.5mm] (0, 0) -- (2.5, 0);
\draw[line width = 0.5mm] (0, -2) -- (0, 2);
\draw[line width = 0.5mm] (0, 0) -- (-1, 2);

\draw[->, line width = 0.5mm] (0, 0) -- (1, 0);
\draw (1.4, -0.3) node {$\rho_1(1,0)$};

\draw[->, line width = 0.5mm] (0, 0) -- (0, 1);
\draw (0.8, 0.9) node {$\rho_2(0,1)$};

\draw[->, line width = 0.5mm] (0, 0) -- (0, -1);
\draw (0.9, -1.1) node {$\rho_4(0,-1)$};

\draw[->, line width = 0.5mm] (0, 0) -- (-0.75, 1.5);
\draw (-1.5, 1.1) node {$\rho_3(-1, s)$};

\end{tikzpicture}

\caption{The fan corresponding to Hirzerbruch surface $\mathcal{H}_s$. }\label{fig:M2}
\end{figure}

The Hirzebruch surface is smooth. It is well known that any complete toric surface is a projective variety; see \cite[Chapter II.4]{FULTON}.

By $\sigma_{ij}$ we mean a two-dimensional cone in $\Delta_{\mathcal{H}_s}$ with rays $\rho_i$ and $\rho_j$. By $\sigma_i$ we mean a one-dimensional cone $\rho_i$ and by $\sigma_0$ the vertex of $\Delta_{\mathcal{H}_s}$ respectively.

The divisor class group $\mathrm{Cl}(X)$ is freely generated by $[D_{\rho_3}]$ and $[D_{\rho_2}]$. We have  $[D_{\rho_1}] = [D_{\rho_3}]$ and $[D_{\rho_4}] = [D_{\rho_2}] + s[D_{\rho_3}]$. Then all monoids in $\Upsilon(\Delta_{\mathcal{H}_s})$  except $\Gamma(\sigma_{12}),\ \Gamma(\sigma_{23})$ and $\Gamma(\sigma_2)$ are equal to $\langle[D_{\rho_2}], [D_{\rho_3}]\rangle$, where by $\langle S\rangle$ we mean a monoid generated by a set $S$. We denote the monid $\langle[D_{\rho_2}], [D_{\rho_3}]\rangle$ by $A$.

Monoids $\Gamma(\sigma_{12}),\ \Gamma(\sigma_{23})$ and $\Gamma(\sigma_2)$ are equal to the monoid $B = \langle[D_{\rho_3}], [D_{\rho_2}] + s[D_{\rho_3}]\rangle$. The element $[D_{\rho_2}]$ does not belong to $B$, so $A \neq B$.

Any automorphism $\phi$ from \cite[Theorem 3.7]{Bazhov} permutes monoids in $\Upsilon(\Delta_{\mathcal{H}_s})$. But in $\Upsilon(\Delta_{\mathcal{H}_s})$ there are 5 monoids equal to $A$ and 3 monoids equal to $B$. So there is no such $\phi$ that $\phi(A) = B$. 

Then all points in $\mathcal{H}_s$ belong to $\mathrm{Aut}(X)$-orbit of some smooth $T$-fixed point and all points are Euler points with respect to any nondegenerate linearly normal embedding into a projective space. But there are two different $\mathrm{Aut} (X)$-orbits. 
\end{ex}

\begin{ex}
Not all smooth points on a projective toric variety  are necessarily Euler points. Any  projective toric variety that does not admit an additive action is suitable as a counterexample. Indeed, points from the open $T$-orbit are smooth but not Euler. However it is also interesting to consider an example of Euler-symmetric variety, in which not all smooth points are Euler.   

Consider a blow-up of $\mathbb{P}^1\times \mathbb{P}^1$ at a point. It is a smooth projective toric variety and the corresponding fan is shown in Figure \ref{fig:M3}.

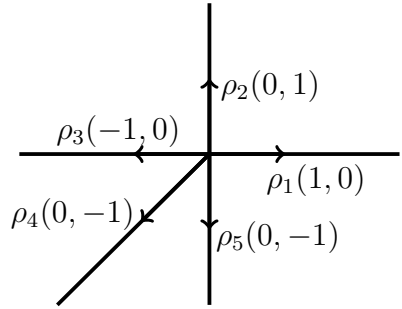
\begin{figure}[hbt!]
\centering
\begin{tikzpicture}

\draw[line width = 0.5mm] (-2.5, 0) -- (2.5, 0);
\draw[line width = 0.5mm] (0, -2) -- (0, 2);
\draw[line width = 0.5mm] (0, 0) -- (-2, -2);

\draw[->, line width = 0.5mm] (0, 0) -- (1, 0);
\draw (1.4, -0.35) node {$\rho_1(1,0)$};

\draw[->, line width = 0.5mm] (0, 0) -- (0, 1);
\draw (0.8, 0.9) node {$\rho_2(0,1)$};

\draw[->, line width = 0.5mm] (0, 0) -- (-1, 0);
\draw (-1.2, 0.3) node {$\rho_3(-1, 0)$};

\draw[->, line width = 0.5mm] (0, 0) -- (-0.9, -0.9);
\draw (-1.8, -0.8) node {$\rho_4(0,-1)$};

\draw[->, line width = 0.5mm] (0, 0) -- (0, -1);
\draw (0.9, -1.1) node {$\rho_5(0,-1)$};

\end{tikzpicture}

\caption{The fan corresponding to  blow-up of $\mathbb{P}^1\times \mathbb{P}^1$ at a point. }\label{fig:M3}
\end{figure}

The divisor class group $\mathrm{Cl} (X)$ is freely generated by classes $[D_{\rho_3}], [D_{\rho_4}]$ and $[D_{\rho_5}]$. Also we have $[D_{\rho_1}] = [D_{\rho_3}]+[D_{\rho_4}]$ and $[D_{\rho_2}] = [D_{\rho_4}] + [D_{\rho_5}]$. Let us consider $[D_{\rho_3}], [D_{\rho_4}], [D_{\rho_5}]$ as a basis of $\mathrm{Cl} (X)$. We will show that the points from the orbit corresponding to $\sigma_4$ are not Euler. Indeed, the monoid $\Gamma(\sigma_4) = \langle(1, 0, 0), (0, 0, 1), (1, 1, 0), (0, 1, 1)\rangle$ is not equal to any of monoids corresponding to fixed points listed below: 

$$\Gamma(\sigma_{23}) = \langle(0,1,0), (0,0,1),(1,1,0)\rangle,\ \ \Gamma(\sigma_{15}) = \langle(0,1,0), (1,0,0),(0,1,1)\rangle,$$
$$\Gamma(\sigma_{34}) = \langle(0,0,1), (1,1,0), (0,1,1)\rangle, \  \Gamma(\sigma_{45}) = \langle(1,0,0),(1,1,0), (0,1,1)\rangle,$$
$$ \Gamma(\sigma_{12}) = \langle(1, 0, 0), (0, 1, 0), (0, 0, 1)\rangle. $$

Any automorphism $\phi$ of the group $\mathrm{Cl} (X)$ satisfying the condition of Theorem \ref{Bazh} permutes the elements $[D_{\rho_i}]$. So $\phi$ is an automorphism of monoid $\langle(1,0,0), (0,1,0), (0,0,1)\rangle$. The elements $[D_{\rho_3}], [D_{\rho_4}], [D_{\rho_5}]$ are irreducible in this monoid but $[D_{\rho_1}]$ and $[D_{\rho_2}]$ are not. So $\phi$ permutes the elements $[D_{\rho_3}], [D_{\rho_4}], [D_{\rho_5}]$. The elements $[D_{\rho_1}]$ and $[D_{\rho_2}]$ are both divisible by $[D_{\rho_4}]$. So $\phi$ is either identical or $\phi$ permutes $[D_{\rho_3}]$ and $[D_{\rho_5}]$ and preserves $[D_{\rho_4}]$. In both cases  $\Gamma(\sigma_4)$ is preserved. 

Therefore $\mathrm{Aut} (X)$-orbit of any $T$-fixed point does not meet points in $O_{\sigma_4}$. So points in $O_{\sigma_4}$ are not Euler but smooth. 

\end{ex}

\end{document}